\theoremstyle{plain} 
\newtheorem{theorem}{\indent\sc Theorem}[section]
\newtheorem{lemma}[theorem]{\indent\sc Lemma}
\newtheorem{corollary}[theorem]{\indent\sc Corollary}
\newtheorem{proposition}[theorem]{\indent\sc Proposition}
\theoremstyle{definition} 
\newtheorem{definition}{\indent\sc Definition}
\newtheorem{remark}[theorem]{\indent\sc Remark}
\newtheorem{notation}[theorem]{\indent\sc Notation}
\newcommand{\rexp}{\rm{exp}}
\newcommand{\rden}{\rm{den}}
\newcommand{\C}{\mathbb{C}}
\newcommand{\R}{\mathbb{R}}
\newcommand{\Q}{\mathbb{Q}}
\newcommand{\Z}{\mathbb{Z}}
\newcommand{\AND}{~~\textrm{and}~~}
\newcommand{\al}{\alpha}
\newcommand\ee{\varepsilon}
\newcommand\eps{\varepsilon}
\newcommand{\hh}{\mathrm{h}}
\newcommand{\mueff}{\mu_{\textrm{eff}}}
\begin{document}

\title{Approximation measures for shifted logarithms of algebraic numbers \\via effective Poincar\'e-Perron} 
\author{Noriko Hirata-Kohno, Ryuji Muroi and Yusuke Washio}
\date{}
\maketitle

\begin{abstract}

In this article,  we show new effective approximation measures
for the \textit{shifted} logarithm of algebraic numbers
which belong to a number field of  arbitrary degree. 
Our measures refine previous ones including those for usual logarithms.
We adapt Pad\'e approximants constructed by M.~Kawashima and A.~Po\"{e}ls in the rational case.
Our key ingredient relies on the Poincar\'e-Perron theorem which gives us 
asymptotic estimates at \textit{every} archimedean place of the number field,
that we apply to decide whether the shifted logarithm lies outside of the number field. 
We use  Perron's second theorem and its modification due to M.~Pituk
to handle general cases. 

\end{abstract}
{AMS Subject Classification and keywords\footnote{
{2020 \emph{AMS Subject Classification}: Primary 11J72; Secondary 11J82, 33C05,  33C20, 11B37, 65Q30.}
\\
{\emph{Key words}: Diophantine approximation, Pad\'e approximation,  irrationality measure, approximation measure,  shifted logarithm,
hypergeometric function, linear recurrence sequence, Poincar\'e-Perron theorem}}}

\section{Introduction}\label{Intro}

The Poincar\'e-Perron theorem in the theory of linear difference equations
has been adapted by K.~Alladi-M.~L.~Robinson \cite{A-R}, G.~Chudnovsky \cite{ch9} and M.~Kawashima-A.~Po\"{e}ls \cite{KP}, 
to obtain sharp irrationality measures. 
The theorem is applied to obtain asymptotic analytic estimates in arithmetic problems, especially in the so-called Pad\'e approximations \cite{Pade1,Pade2}
giving often a criterion for the irrationality of numbers in question.

The present article gives an effective generalization in the algebraic case
of the irrationality measures proven in \cite{KP}. In particular we show explicit examples of non-quadraticity measures 
as well as general approximation measures,
for the value at algebraic points of the shifted logarithmic function:~a specific hypergeometric function.
The construction of Pad\'e approximants in \cite{KP}  is endowed with a linear recurrence sequence,  that allows us to apply the Poincar\'e-Perron theorem which provides \textit{asymptotic} estimates. 
Although related linear independence criteria are shown by  Pad\'e approximations in \cite{DHK2, DHK3}, these linear independence measures
are not as sharp as ours because of the lack of asymptotic estimate. 

We follow the formulations due to F.~Amoroso-C.~Viola \cite{A-V} and R.~Marcovecchio-Viola  \cite{marco2} to obtain
effective approximation measures generalizing the irrationality ones, but we
replace the saddle point method used in \cite{A-V}  \cite{marco2},  by the Poincar\'e-Perron theorem.

Note that the transcendence of values of the shifted logarithmic function, more generally that of the Lerch function
at algebraic points, does not follow from Baker's theory \cite{Baker} except for the usual logarithms, because of the \textit{shift}.
Neither the transcendence criterion in \cite{bw} \cite{wo1} yields general approximation measures of our form.
We also remark that the shift of the Lerch function is related to an expression of the values of the Riemann zeta function \cite{Z}. 

\

Let $\xi\in \R\setminus \Q.$ The non-negative real number $\mu$ 
is an irrationality measure of $\xi$ if, for any $\eps>0$, there exists a constant $q_0=q_0(\xi, \varepsilon)>0$ such that
every rational number ${p/q}$
with $q \geq q_0$ satisfies 
\begin{align}\label{eq: exponent}
\vert \xi - {p/q} \vert> 1/{q^{\mu(1+ \eps)}}.
\end{align}
If the constant $q_0$ is effectively computable for all 
$\eps>0$, then $\mu$ is called an effective irrationality measure for $\xi$. 

For an algebraic number field $K$, let us define a $K$-approximation measure, 
as defined in \cite[Definition 2.1]{A-V} \cite[Definition 2.1]{marco2}.
We write $ K_{\infty} $ the completion of $K$ by an archimedean metric. 
Denote by $\hh$ the Weil absolute logarithmic height, defined in Section~\ref{section:height}.

\begin{definition}
Let $K\subset \C$ be an extension of $\Q$ of finite degree and $\xi\in K_{\infty} \setminus K$.
We say that a non-negative real number $\mu$ 
is a $K$-approximation measure of $\xi$, if for any $\varepsilon>0$ there exists a constant 
$h_0=h_0(\xi, \varepsilon)>0$ such that
\begin{align*}
    \log\left\vert\xi-\alpha\right\vert\geq -(1+\varepsilon)\cdot\mu\cdot \hh(\alpha)
\end{align*}
for any algebraic numbers $\alpha\in K$ with $\hh(\alpha) \geq h_0$.
If the constant $h_0$ is effectively computable for all $\ee>0$, then $\mu$ is called an effective $K$-approximation measure. 
The least $K$-approximation measure of $\xi$ is denoted by $\mu(\xi, K)$, and an effective one 
is denoted by $\mueff=\mueff(\xi, K)$, that is
\begin{align}\label{eq: exponent}
    \log\left\vert\xi-\alpha\right\vert\geq -(1+\varepsilon)\cdot\mueff\cdot \hh(\alpha)
\end{align}
for any algebraic numbers $\alpha\in K$  of height $\hh(\alpha)\geq h_0$ with an effective constant $h_0>0$.

\end{definition}
When $K=\Q$ and $\xi\in\R\setminus\Q$, the theory of continued fractions guarantees that $\mu\geq 2$, with $\mu(\xi, \Q)=2$ for almost all $\xi\in \R$ in the sense of the Lebesgue measure. The 
theorem of K.~F.~Roth implies $\mu(\xi, \Q)=2$ if $\xi$ is an irrational algebraic number, but
we do not know how to compute the constant $h_0>0$, thus Roth's theorem is not effective.
In this article, all the steps of our proof are effectively performed.

The articles \cite{A-V, Ha2, marco2}  give
$K$-approximation measures in the algebraic case by the saddle-point method, for the usual logarithms {without} shift.
For general polylogarithms or values of the Lerch function of algebraic numbers, E.~M.~Niki\v{s}in \cite{N},
A.~I.~Galochkin \cite{G2}, G.~V.~Chudnovsky \cite{ch11}, M.~Hata \cite{Ha} and K.~V$\ddot{\text{a}}$$\ddot{\text{a}}$n$\ddot{\text{a}}$nen  \cite{Va} gave 
linear independence criteria, mainly over the  rationals
or quadratic imaginary fields. Refer to \cite{DHK2, DHK3} a precise comparison with previous results and a historical survey.

The principal difference between results in \cite{A-V, DHK2, DHK3, Ha, Ha2, marco2} and ours, is  
as follows. First, 
our method relies on the Poincar\'e-Perron theorem (known in the archimedean case \textit{only}), 
as are done in the rational case in \cite{A-R, ch9, Hab, KP}. 
We use the Poincar\'e-Perron theorem $[K\,:\,\Q]$ times 
at each archimedean place, for our generalization to the algebraic case. 
Thanks to \textit{asymptotic} estimates in  Pad\'e approximation deduced from  the Poincar\'e-Perron theorem,  we obtain the result. 
Second, in Section~\ref{PP}, we apply Pituk's generalization of Perron's Second theorem.
Since we checked the effectivity of all the proof the theorem of Pituk \cite[Theorem 2]{Pituk} which generalizes Perron's Second theorem \cite{Perron21} (see also \cite{Perron09} \cite{Perron29}), we adapt 
effective statement due to Pituk \cite[Theorem 2]{Pituk} presented as Theorem \ref{Pituk} in Section~\ref{PP} in this article.

This article is organized as follows. 
In Section~\ref{section:height}, we review the definition of the Weil absolute logarithmic height.
The notations and our main result are stated in Section~\ref{notations}. 
Section~\ref{lemma} is devoted to introduce a lemma on
general approximation measures in \cite{A-V, marco2}.
In Section~\ref{Pade}, we recall the Pad\'e approximants 
constructed in  \cite[Proposition~3.5]{KP}, 
those satisfy a linear recurrence sequence \cite[Lemma 3.8]{KP}.
Pituk's generalization of the Poincar\'e-Perron theorem is given in Section~\ref{PP} and
we prove our main theorem in Section~\ref{proof}. 
Numerical explicit examples are shown in Section~\ref{example}.

\section{Heights}\label{section:height}
Throughout the article, for $z\in\C$, write $|z|$ its usual absolute value.
Let $K$ be a number field of finite degree and put $d=[K\,:\,\Q]$. 
Denote the set of places of $K$  by ${{{M}}}_K$,
respectively by ${{M}}^{\infty}_K$ for 
the archimedean places, and by ${{{M}}}^{f}_K$
for the non- archimedean places. We shortly write $v|\infty$ when $v\in{{M}}^{\infty}_K$ and $v\kern-2.3pt\not |\infty$ when $v\in{{{M}}}^{f}_K$.

\begin{definition}
For a place $v$ of $K$, we denote the normalized absolute value by $| \cdot |_v$ defined as follows:
\begin{align*}
&|p|_v=p^{-1} \quad~(v\kern-2.3pt\not |\infty, ~v|p)\enspace,\quad  |x|_v=|\sigma(x)|\quad(v|\infty)\enspace,
\end{align*}
where $p$ is the rational prime above $v$ when $v$ is finite, and 
$\sigma=\sigma_k~(1\leq k \leq d)$ is a conjugate map, namely one
embedding $K\hookrightarrow \C$ corresponding to $v$ infinite. Denote by
$d_v=[K_v:\Q_v]$ the local degree for $v$, with  $d_v=[K_{\infty}:\R]=1$ if $\sigma$ real,  and $d_v=[K_{\infty}:\R]=2$ otherwise. 
    For each $v\in {{M}_K}$ and $\al\in K$, we define the \textit{Weil absolute  logarithmic height} of  $\al$ by
 
\begin{center}
$\hh(\al)=\big(1/{d}\big)\sum_{v\in {{M}}_K} d_v\log^{+} |\al|_v$
\end{center}
with $\log^{+}x=\log\max\{x, 1\}$ for any $x\geq 0$.
Put also 
\begin{center}
$\hh_{\infty}(\al)=\big(1/d\big)\sum_{v\in{{M}}^{\infty}_K} d_v\log^{+} |\al|_v \AND 
\hh_{f}(\al)=\big(1/d\big)\sum_{v\in{{{M}}}^{f}_K} d_v \log^{+} |\al|_v.$
\end{center}
\end{definition}
 
\begin{definition}
Define $\delta=[K : \Q] / [K_{\infty} : \R]$ for $K_{\infty}=\R$ if $K\subset \R$ and $K_{\infty}=\C$ otherwise. 
Then 
\begin{equation}\label{delta}
\delta=d \text{~~when~~}  K  \text{~~is~real,~}  \delta=d/2  \text{~~otherwise}.
\end{equation}  
By the Liouville inequality~\cite[(2.\,5)]{A-V} for $\alpha\neq 0$, it follows $\log \vert\alpha\vert\geq -\delta \hh(\alpha)$, and
by the box principle~\cite[(2.\,6)]{A-V}\cite[p.\,253]{Schmidt}~, we have 
$\mu(\xi, K)\geq 2\delta$.

Let us consider an algebraic number $\beta$ under suitable hypothesis being specified later.
We shall restrict ourselves to $\xi=f(\beta)$ where $f(z)$ is a certain hypergeometric function defined in the next section.

\section{Notations and the main result}\label{notations}
Throughout the article, $n$ denotes a non-negative rational integer in $\Z_{\geq 0}$. 

\begin{definition}
Let  $z\in \C$ with $\vert z \vert> 1$ and $ x\in\mathbb{Q}\cap[0,1)$.
Consider $ \Phi_s(x,z) =\sum_{k=0}^{\infty}z^{k+1}/{(k+x+1)}^s$,  the $s$-th Lerch function.
Let us define \textit{the shifted logarithmic function} with a shift $x$ by
\begin{equation}\label{eq: three special cases for f}
f(z)=
(1+x)\Phi_1(x,1/z)={(1+x)}{\textstyle{\sum_{k=0}^{\infty}}}\,1/\Big((k+x+1)\cdot z^{k+1}\Big).
\end{equation}
The function 
\begin{equation}\label{hyg}
f(z)=\textstyle{\sum_{k=0}^{\infty}\frac{\textstyle{\prod_{j=1}^k}(j+x)}{(x+2)_k\cdot z^{k+1}}}
\end{equation}
 is a specific case of the generalized hypergeometric function
with the $k$-th Pochhammer symbol $(a)_k= a(a+1)\cdots(a+k-1)$ and $(a)_0 = 1$.

Instead of $\Phi_1(x,1/z)$, we employ $f(z)$ with the factor $(1+x)$, since $f(z)$ is a generalized hypergeometric function
by the expression \eqref{hyg}.
\end{definition}

To control denominators,  we define the following notations. 
\begin{notation}\label{corrected}
Let $\beta$ be an algebraic number and  $x\in\mathbb{Q}\cap[0,1)$. We put\footnote{We correct 
the notations of $\nu(x), \nu_n(x)$  in \cite[p.~650, line.~-11]{KP}) as {Notation} 3.1 above.}
\begin{align}
&{\rm{den}}(\beta)=\min\{1\leq m\in \Z \mid  \text{$m\beta$ is an algebraic integer}\},\\ 
&\nu(x)={\rm{den}}(x)\cdot\textstyle{\prod}_{\substack{q:\rm{prime} \\q|{\rm{den}}(x)}}q^{1/(q-1)},~~~~~
\nu_n(x)= {\rm{den}}(x)^n\cdot\textstyle{\prod}_{\substack{q:\rm{prime} \\ q|{\rm{den}}(x)}}q^{\lfloor n/(q-1)\rfloor},\\
&\kern-3pt d_n(x)={\rm{den}}\left(({1+x})^{-1},\ldots,({n+x})^{-1}\right) \AND { \kappa_n=\kappa_n(\beta)={\rm{den}}(x)\nu_{n}(x)d_n(x){\big(\rm{den}}(\beta)\big)^n.}
\end{align}
\end{notation}

We further prepare the notations below. 

\begin{notation}
 For a given $z\in \C$ with $|z|>1$, consider the polynomial
 \begin{equation}\label{eq: P}
    P(X) = X^2-2(2z-1)X+1.
\end{equation}
Let us fix a branch of $\sqrt{z^2-z}$. 
Denote the two roots $\lambda_1, \lambda_2$ of \eqref{eq: P} 
with  $\rho_j(z)=|\lambda_j|, (1\leq j \leq 2$ respectively) and $\rho_1(z) \leq \rho_2(z).$
Note that $ \rho_1(z)\cdot  \rho_2(z)=1$ with $|z|>1$ implies
\begin{equation}\label{strict}
\rho_1(z)< 1 <  \rho_2(z).
\end{equation}
Indeed,  if $\rho_1(z)=\rho_2(z)=1$, then
$2= \rho_1(z)+\rho_2(z) \geq 2|2z-1| \geq 2(2|z|-1) > 2$, a contradiction \cite[p.~664, line 1--5]{KP}. 
Hence, the equation (\ref{eq: P}) has two distinct simple roots.

When $\beta$ is an algebraic number with $\vert\beta\vert>1$,
for each conjugate  isomorphism  $\sigma_k~ (1\leq k \leq d)$  corresponding to the
archimedean place, consider
\begin{equation}\label{rho}
P_k(X) = X^2-2(2\sigma_k(\beta)-1)X+1~~ (1\leq k \leq d).
\end{equation}
Noting that $ \sigma_k$ is an isomorphism, we have $\sigma_k(\beta)\neq 0$ and $ \sigma_k(\beta)\neq 1$, 
hence \eqref{rho} has also two distinct simple roots. 
Denote the two roots of \eqref{rho} by $\lambda_1(k), \lambda_2(k)$ for each $1\leq k\leq d$ and put 
\begin{equation*}\label{rho-s}
 {\rho_1(\sigma_k(\beta)):=\min \{|\lambda_1(k)|, |\lambda_2(k)|\}, ~~\rho_2(\sigma_k(\beta)):=\max \{|\lambda_1(k)|, |\lambda_2(k)|\}.}
\end{equation*}
Hence it follows
 $\rho_1(\sigma_k(\beta)) \leq 1\leq \rho_2(\sigma_k(\beta))$ since $\rho_1(\sigma_k(\beta))\cdot \rho_2(\sigma_k(\beta)) =1$.
 By $|\beta|>1$ we have $\rho_1(\beta) <1<\rho_2(\beta)$, thus, 
if  we suppose that the two modulus are distinct: 
$\rho_1(\sigma_k(\beta))\neq\rho_2(\sigma_k(\beta))$ for  all $2\leq k \leq d$, then the following inequality holds: 
\begin{equation}\label{rhorho}
\rho_1(\sigma_k(\beta)) <1<\rho_2(\sigma_k(\beta)) ~(1\leq  \forall k \leq d).
\end{equation}
If  $\rho_1(\sigma_k(\beta)) =\rho_2(\sigma_k(\beta))$, by \eqref{rho} we have $\rho_1(\sigma_k(\beta)) =\rho_2(\sigma_k(\beta))=1$,
which is the excluded case, to adapt Theorem \ref{Pituk} (see {Remark} \ref{rem}).

\end{notation}

\begin{notation}\label{DQE}
Let $\beta$ be an algebraic number with $\vert\beta\vert>1$ and
$x\in\mathbb{Q}\cap[0,1)$. Define $\Delta$, $Q$, $E$ by
 \begin{align*}
    \Delta & ={\rden}(\beta)\cdot{\rexp}\left({\rden}(x)/\varphi({\rden}(x))\right)\cdot \nu(x), \\
         Q& =\Delta\cdot\textstyle{\prod}_{1\leq k \leq d}\Big( \rho_2(\sigma_k(\beta)\Big),  ~~
        E = \rho_2(\beta)/\rho_1(\beta)=\big(\rho_2(\beta)\big)^2,
         \end{align*}
where $\varphi$ denotes the Euler's totient function.
\end{notation}

\end{definition}

Our theorem is as follows.

\begin{theorem}[Main Theorem]\label{main}
Let $\beta$ be an algebraic number of degree $d$ over $\Q$ and
$x\in\mathbb{Q}\cap[0,1)$. Put $K=\Q(\beta)$. 
Suppose $\vert\beta\vert>1$ and assume $\rho_1(\sigma_k(\beta)) <\rho_2(\sigma_k(\beta))~(2\leq k \leq d)$.
Define $\Delta, Q, E$ as in {Notation} \ref{DQE}. 
If  
\begin{equation}\label{important}
{\lambda}:=\dfrac{1}{\delta}-\dfrac{\log Q}{d\,\log\,E}>0,
\end{equation}
  then $f(\beta)=(1+x) \Phi_1(x,1/\beta) \notin K$, and its effective $K$-approximation measure
satisfies
\[ \mueff(f(\beta), K)\leq {\lambda}^{-1}.\]
\end{theorem}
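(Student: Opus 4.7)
The plan is to combine the Pad\'e approximants of Section~\ref{Pade} with the Poincar\'e--Perron asymptotics of Section~\ref{PP} and to feed the resulting data into the general $K$-approximation-measure lemma of Section~\ref{lemma}. First, I would take the numerators and denominators $P_n,Q_n\in K[Z]$ produced by the Kawashima--Po\"{e}ls construction, together with their order-two linear recurrence whose characteristic polynomial at $z=\beta$ is the quadratic $P(X)$ of \eqref{eq: P}. The remainder $R_n:=Q_n(\beta)f(\beta)-P_n(\beta)$ is a distinguished solution of this recurrence, and from the integral/hypergeometric representation one reads off that $R_n\neq 0$ for every $n$. After multiplication by $\kappa_n$, the quantities $\kappa_n P_n(\beta)$ and $\kappa_n Q_n(\beta)$ belong to $\GrO_K$, and Chebyshev--Mertens-type estimates for $d_n(x)$ together with the explicit formula for $\nu_n(x)$ give $\kappa_n^{1/n}\to \Delta$.

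Next, at each archimedean place $\sigma_k$ of $K$ with $1\le k\le d$, the conjugate sequences $\bigl(\sigma_k(P_n(\beta))\bigr)_n$ and $\bigl(\sigma_k(Q_n(\beta))\bigr)_n$ satisfy the same recurrence but with characteristic polynomial $P_k(X)$ of \eqref{rho}, whose roots have distinct moduli $\rho_1(\sigma_k(\beta))<\rho_2(\sigma_k(\beta))$ by hypothesis \eqref{rhorho}. Applying Theorem~\ref{Pituk} place by place yields
\[
\limsup_{n\to\infty}|\sigma_k(Q_n(\beta))|^{1/n}\le \rho_2(\sigma_k(\beta)),\qquad 1\le k\le d,
\]
and the analogous bound for $\sigma_k(P_n(\beta))$. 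At the distinguished embedding of $K$ into $K_\infty$, the Pad\'e vanishing at $z=\beta$ forces $R_n$ into the subspace associated with the small root, so
\[
\limsup_{n\to\infty}|R_n|^{1/n}\le \rho_1(\beta)=\rho_2(\beta)^{-1}.
\]

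Assembling these inputs, the approximant $\alpha_n:=P_n(\beta)/Q_n(\beta)\in K$ has its Weil height controlled, across all places, by $Q^n$ up to sub-exponential factors (the archimedean contribution is $\prod_k\rho_2(\sigma_k(\beta))^n$ while the non-archimedean contribution is $\Delta^n$ through $\kappa_n$), whereas the error $|f(\beta)-\alpha_n|=|R_n|/|Q_n(\beta)|$ decays at the distinguished place like $E^{-n/2}=\rho_2(\beta)^{-2n}$. Feeding these rates into the lemma of Section~\ref{lemma} produces exactly the exponent $\lambda$ of \eqref{important}, hence $\mueff(f(\beta),K)\le\lambda^{-1}$. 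Positivity of $\lambda$ simultaneously forces $f(\beta)\notin K$: if it lay in $K$, Liouville's inequality applied to the non-zero differences $\alpha_n-f(\beta)$ would eventually contradict the rate of convergence produced by the Pad\'e remainder.

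The main obstacle is to make the simultaneous use of Pituk's theorem at every archimedean place genuinely \emph{effective}: Theorem~\ref{Pituk} supplies only the asymptotic growth exponent, so I would need to trace through its proof to extract an explicit bound on the implicit constants, uniform in the embedding $\sigma_k$, in terms of the initial data of the recurrence. A secondary delicate point is the verification that the Pad\'e remainder really selects the small root at the distinguished place---without this, the dominant-root bound there would not beat the trivial estimate and the exponent $\lambda$ would collapse.
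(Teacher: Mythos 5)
Your plan follows the same route as the paper's proof: take the Kawashima--Po\"{e}ls Pad\'e pair $(P_n,Q_n)$ and its order-two recurrence, apply the Poincar\'e--Perron/Pituk asymptotics at each archimedean place, control denominators through $\Delta$, and feed the resulting rates $\rho=\log E$ and $c=(\log Q)/d$ into the Marcovecchio--Viola lemma (Lemma~\ref{marco}). However, there is a genuine gap in how you treat the error term.

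You only record the one-sided bounds
$\limsup_n |\sigma_k(Q_n(\beta))|^{1/n}\le \rho_2(\sigma_k(\beta))$ and $\limsup_n|R_n|^{1/n}\le\rho_1(\beta)$. The $\limsup$ upper bounds are indeed all that is needed for the \emph{height} of $\Theta_n=P_n(\beta)/Q_n(\beta)$, but the error is a quotient, $|f(\beta)-\Theta_n|=|R_n|/|Q_n(\beta)|$, and to get the decay rate $E^{-n}=(\rho_1(\beta)/\rho_2(\beta))^n$ you also need a \emph{lower} bound on the denominator, namely $\liminf_n|Q_n(\beta)|^{1/n}\ge\rho_2(\beta)$. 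Theorems~\ref{perron2} and~\ref{Pituk} only pin down the $\limsup$ of $(\log|x(n)|)/n$, which is trivially at most $\log\rho_2(\beta)$; neither, by itself, supplies a $\liminf$. The paper closes this gap through Proposition~\ref{pade f}(iv), which cites the sharper quantitative bound of [KP, Theorem~7.1] to establish that the \emph{genuine limit} $\lim_n(\log|X_n|)/n=\log\rho$ exists for each solution of the recurrence when the two characteristic roots have distinct moduli; combined with Corollary~\ref{det} (linear independence of $Q_n$ and $R_n$, from the non-vanishing Casoratian) and the identification of $R_n$ with the small root via $\lim_n|R_n(\beta)|^{1/n}=0<1<\rho_2(\beta)$, this yields Lemma~\ref{apply P-P} with genuine limits at all $d$ places. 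Without this step, the $\limsup$ chain in your proposal does not force the ratio $|R_n|/|Q_n(\beta)|$ to decay at the rate you claim, so the exponent $\lambda$ does not follow. Note also that you never explicitly invoke the linear independence of the pair $(Q_n,R_n)$; it is needed to rule out the scenario where $Q_n$ lies in the subdominant direction.

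A small algebra slip: you write the error rate as $E^{-n/2}=\rho_2(\beta)^{-2n}$, but $E^{-n/2}=\rho_2(\beta)^{-n}$; what the argument actually produces, and what you need, is $E^{-n}=\rho_2(\beta)^{-2n}$. With the exponent $-n/2$ fed into Lemma~\ref{marco} the final measure would be off by a factor of two.
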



\section{General approximation measures}\label{lemma}
The following lemma is proven by Marcovecchio-Viola  \cite[Lemma 2.2]{marco2},
replacing $\lim_{n\to \infty}$ by 
$\limsup_{n\to \infty}$ in the assumption of \cite[Lemma 2.4]{A-V}: that is an important improvement.

\begin{lemma}\label{marco}
Let $K\subset \C$ be a number field. Let $\delta$ be the degree defined in \eqref{delta}. Let $\xi\in\C$.
Consider a sequence $\Theta_n\in K$ 
such that $\Theta_n\neq \xi$ for infinitely many $n$  and verifies 
\[\limsup_{n\to \infty}\big(\log|\xi-\Theta_n|\big)/n\leq-\rho \AND \limsup_{n\to \infty}\,\hh(\Theta_n)/n\leq c\]
for positive real numbers $\rho$ and $c$. If 
\begin{equation*}
{\lambda}:=\dfrac{1}{\delta}-\dfrac{c}{\rho}>0
\end{equation*}
then $\xi\notin K$, and  we have
$\mueff(\xi, K)\leq  {\lambda}^{-1}.$
\end{lemma}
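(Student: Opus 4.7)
The plan is to prove Lemma \ref{marco} in two stages: first deduce $\xi\notin K$ by a Liouville-based contradiction, and then, for $\alpha\in K$ of sufficiently large height, use a triangle inequality against a well-chosen $\Theta_n$ combined with Liouville applied to $\alpha-\Theta_n$.

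\textbf{Stage 1 (non-membership).} Assume for contradiction that $\xi\in K$. Restricting to the infinite set of indices with $\Theta_n\neq\xi$, we have $\xi-\Theta_n\in K\setminus\{0\}$, so Liouville's inequality (\cite[(2.5)]{A-V}) together with the standard height estimate $\hh(\xi-\Theta_n)\leq\hh(\xi)+\hh(\Theta_n)+\log 2$ gives
\[
\log|\xi-\Theta_n|\geq -\delta\bigl(\hh(\Theta_n)+\hh(\xi)+\log 2\bigr).
\]
Dividing by $n$ and taking $\limsup$ yields $-\rho\geq -\delta c$, i.e.\ $c/\rho\geq 1/\delta$, which contradicts $\lambda>0$.

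\textbf{Stage 2 (effective bound).} Fix $\eps>0$ and choose $\eta>0$ small enough that $A:=\rho-\eta-\delta(c+\eta)>0$ and $\delta(\rho-\eta)/A\leq (1+\eps/2)\lambda^{-1}$; this is possible because $A\to\rho-\delta c$ and $\delta\rho/(\rho-\delta c)=\lambda^{-1}$ as $\eta\to 0$. For $\alpha\in K$ (necessarily $\neq\xi$ by Stage 1), let $n=n(\alpha)$ be an integer in a narrow window around $(\delta\,\hh(\alpha)+C_0)/A$ for an explicit absolute constant $C_0$, taken large enough that the asymptotic estimates $\hh(\Theta_n)\leq(c+\eta)n$ and $|\xi-\Theta_n|\leq e^{-(\rho-\eta)n}$ both hold, and with $\Theta_n\neq\alpha$. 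Applying Liouville to $\alpha-\Theta_n$:
\[
|\alpha-\Theta_n|\geq\exp\bigl(-\delta(\hh(\alpha)+\hh(\Theta_n)+\log 2)\bigr)\geq 2\,|\xi-\Theta_n|,
\]
where the second inequality follows from the calibrated choice of $n$. The triangle inequality then gives $|\xi-\alpha|\geq\tfrac12|\alpha-\Theta_n|$, so
\[
\log|\xi-\alpha|\geq -\delta\,\hh(\alpha)-\delta(c+\eta)n+O(1)\geq -\frac{\delta(\rho-\eta)}{A}\,\hh(\alpha)+O(1),
\]
using $A+\delta(c+\eta)=\rho-\eta$. For $\hh(\alpha)$ beyond an effective threshold $h_0(\eps)$ the $O(1)$ term is absorbed into an extra factor $1+\eps/2$, yielding $\log|\xi-\alpha|\geq -(1+\eps)\lambda^{-1}\hh(\alpha)$ as required.

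\textbf{Main obstacle.} The delicate point is the calibration of $n=n(\alpha)$: it must be large enough that Liouville's lower bound on $|\alpha-\Theta_n|$ strictly exceeds $2|\xi-\Theta_n|$, but small enough that the term $\delta\hh(\Theta_n)\sim\delta c\,n$ does not destroy the sharp coefficient $\lambda^{-1}$; the optimum sits exactly where $A+\delta(c+\eta)=\rho-\eta$ produces the factor $\delta\rho/(\rho-\delta c)$. A secondary technicality is ensuring $\Theta_n\neq\alpha$ so that Liouville applies; since $\Theta_n\to\xi\neq\alpha$ exponentially, at most finitely many $n$ can satisfy $\Theta_n=\alpha$, so a bounded shift inside the window suffices and preserves the effectivity of $h_0$.
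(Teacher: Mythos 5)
The paper does not actually prove this lemma: it simply cites \cite[Lemma 2.2]{marco2} and asserts that the proof there is effective. Your argument is, in substance, a correct reconstruction of that standard proof: Stage 1 (Liouville applied to $\xi-\Theta_n$ along the subsequence where $\Theta_n\neq\xi$, giving $\rho\leq\delta c$ and hence a contradiction with $\lambda>0$) is sound, and the Stage 2 calibration is right --- with $A=\rho-\eta-\delta(c+\eta)$ the Liouville bound on $|\alpha-\Theta_n|$ dominates $2|\xi-\Theta_n|$ exactly when $An\geq\delta\hh(\alpha)+(\delta+1)\log 2$, and the identity $A+\delta(c+\eta)=\rho-\eta$ produces the exponent $\delta(\rho-\eta)/A\to\lambda^{-1}$. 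The effectivity of $h_0$ reduces to the (implicitly assumed) effectivity of the two $\limsup$ hypotheses, which is exactly how the paper uses the lemma.

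The one genuine weak spot is your treatment of the case $\Theta_{n}=\alpha$. Your claim that ``a bounded shift inside the window suffices'' is not justified as stated: the set $\{n:\Theta_n=\alpha\}$ is finite, but its largest element is controlled only by $|\xi-\alpha|$ itself (namely $\Theta_n=\alpha$ forces $n\leq -\log|\xi-\alpha|/(\rho-\eta)$), so the required shift is not bounded independently of $\alpha$ and cannot simply be absorbed into $C_0$. The standard repair is a self-referential step: let $n_1$ be the smallest index $\geq n_0$ with $\Theta_{n_1}\neq\alpha$. If $n_1>n_0$ then $\Theta_{n_1-1}=\alpha$ gives $n_1\leq 1-\log|\xi-\alpha|/(\rho-\eta)$; running your Liouville--triangle estimate at $n_1$ (the condition $An_1\geq\delta\hh(\alpha)+C_0$ still holds since $n_1\geq n_0$) yields
\begin{equation*}
\Bigl(1-\tfrac{\delta(c+\eta)}{\rho-\eta}\Bigr)\log|\xi-\alpha|\;\geq\;-\delta\,\hh(\alpha)+O(1),
\end{equation*}
and since $1-\delta(c+\eta)/(\rho-\eta)=A/(\rho-\eta)>0$ this rearranges to the same bound $\log|\xi-\alpha|\geq-\delta(\rho-\eta)A^{-1}\hh(\alpha)+O(1)$. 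With that feedback step inserted, your proof is complete and fully effective.
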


\begin{proof}
This is  \cite[Lemma 2.2]{marco2}. The proof of the lemma is everywhere effective.
\end{proof}

\section{Pad\'e approximants satisfying a linear recurrence }\label{Pade}
Here, we recall the results in \cite[Proposition 3.5, Theorem 7]{KP} on 
the polynomials  
which form Pad\'e approximants for the function $f(z)$, which is the case $\alpha=1, \gamma=x, \delta=-x$ \cite[p.~662, line~-8, Proof of Lemma 4.2 (ii)]{KP},
satisfying 
a linear recurrence of order $2$ \cite[Lemma 3.8]{KP}.
Here we put $P_n=P_{n,1}, Q_n=P_{n,0}$ for those defined in \cite[p.~662, line -6 and -7, Proof of Lemma 4.2]{KP} also in \cite[Proposition 3.5]{KP}.

\begin{proposition}\label{pade f}
Consider the pair of polynomials $\big(Q_n(z), P_n(z)\big)_{n\geq 0}\in\Q[z]^2$ defined by
   \begin{align*}
    &Q_{n}(z)=\sum_{\ell=0}^n(-1)^{\ell}\dfrac{(n+1+x)_{n-\ell}}{(n-\ell)!}\dfrac{(n-\ell+1+x)_{\ell}}{\ell!}z^{n-\ell},\\
    &P_{n}(z)=\sum_{\ell=0}^{n-1}\left(\sum_{k=\ell}^{n-1}(-1)^{n-k-1}\dfrac{(n+1+x)_{k+1}}{(k+1)!}\dfrac{(k+2+x)_{n-k-1}}{(n-k-1)!}\dfrac{(1+x)}{k-\ell+1+x}\right)z^{\ell}.
    \end{align*}
    
Then we have the following properties \rm{(i)}\rm{(ii)}\rm{(iii)}\rm{(iv)}.\\
({\textit{i}}) For each $n$, the pair $\big(Q_n(z), P_n(z)\big)_{n\geq 0}$ forms Pad\'{e} approximants of $f(z)$ of weight $n$. By
 \cite[Proposition 3.5]{KP},
 the remainder function 
    $R_n(z)= Q_n(z)f(z)-P_n(z)$ satisfies $\lim_{n\to \infty}|R_n(z)|^{1/n}=0$ when  $|z|>1$, where
        \[
        R_n(z)=\sum_{\ell=n}^{\infty}\binom{\ell}{n}\dfrac{\prod_{i=1}^\ell(i+x)\cdot n!}{(x+2)_{n+\ell}}\dfrac{1}{z^{\ell+1}}\in\Big(\dfrac{1}{z^{n+1}}\Big),
    \]
    with  $\Big(1/z^{n+1}\Big)$ an ideal in the power series domain $\Q\left[\left[1/z\right]\right]$.

\smallskip

\noindent
({\textit{ii}})
By  \cite[p.~658, line 2, Lemma 3.7]{KP}, we have
  \[\det \begin{pmatrix}Q_n(z) & P_n(z) \\ Q_{n+1}(z) & P_{n+1}(z)\end{pmatrix}\neq 0.\]
\\
({\textit{iii}})
All of $P_n(z), Q_n(z)$ and $ R_n(z)$ satisfy the linear recurrence  \cite[Lemma 3.8 and Theorem 7.1]{KP}:
       \begin{align}\label{recurrence}
    A_nX_{n+1} -(z-B_n)X_n + C_nX_{n-1} = 0 \qquad (n\geq 1)
    \end{align}
    \[\text{with}~
        A_n=\dfrac{(n+x+1)(n+1)}{(2n+x+1)(2n+x+2)} , \ \ B_n=\dfrac{2n^2+(1+x)(2n+x)}{(2n+x)(2n+x+2)}, \ \
        C_n=\dfrac{n(n+x)}{(2n+x)(2n+x+1)},
    \]
for which we have $A_n\to 1/4, B_n\to 1/2$ and $ C_n\to 1/4~~(n\to \infty)$.
Hence the characteristic equation of the linear recurrence exists, which is of form
 \begin{equation}\label{charpolref}
 P(X) = X^2-2(2z-1)X+1.
 \end{equation}

\noindent
({\textit{iv}})
Suppose that 
the two roots of \eqref{charpolref} are distinct in modulus. Then by \cite[Theorem 7.1 (32)]{KP}, the sequence 
$X_n$ in \eqref{recurrence} satisfies,  for one modulus $\rho$ of the roots of \eqref{charpolref}:
\[\vert X_n \vert \leq \big(C\cdot \rho^n\big)/\sqrt{n}\]
with an effective constant $C>0$ that can be explicitly expressed depending only on $f(\beta)$.
In particular, the sequence  $X_n$ 
converges and we have $\lim_{n\to \infty}  \big(\log|X_n|\big)/n =\log \rho$.
\end{proposition}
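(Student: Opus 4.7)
The statement is essentially a compilation of results from \cite{KP}, so the plan is to verify each part by pointing to the corresponding result in \cite{KP} while indicating how one would check it directly. For \textit{(i)}, I would substitute the hypergeometric expansion \eqref{hyg} of $f(z)$ into $Q_n(z)f(z)-P_n(z)$ and reorganize as a formal power series in $1/z$. The Pad\'e condition of weight $n$ demands that the coefficients of $z^{-1},\ldots,z^{-n}$ cancel; this cancellation reduces to identities among Pochhammer symbols, and the tail then yields the closed form of $R_n(z)$ displayed in the statement. The decay $|R_n(z)|^{1/n}\to 0$ for $|z|>1$ follows from a standard ratio test, since the quotient of consecutive terms in the tail is $O(1/|z|)$.

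For \textit{(ii)}, I would verify the Casoratian non-vanishing by computing the leading coefficient in $z$ of $Q_n(z)P_{n+1}(z)-Q_{n+1}(z)P_n(z)$, which is an explicit nonzero product of Pochhammer symbols coming from the $\ell=0$ terms of $Q_n$ and $Q_{n+1}$. For \textit{(iii)}, the recurrence \eqref{recurrence} can be checked by direct substitution of the explicit formulas for $P_n$, $Q_n$, and (via the series expression) $R_n$; the limits $A_n\to 1/4$, $B_n\to 1/2$, $C_n\to 1/4$ are immediate from the rational expressions in $n$, and plugging these into the limiting equation $(1/4)X^2-(z-1/2)X+(1/4)=0$ and multiplying through by $4$ yields exactly the characteristic polynomial \eqref{charpolref}.

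The main obstacle is \textit{(iv)}, where the bound $|X_n|\leq C\rho^n/\sqrt{n}$ is sharper than what classical Perron's theorem would provide. The plan is to invoke Pituk's refinement of Perron's Second Theorem (Theorem \ref{Pituk} in Section \ref{PP}), which applies precisely because the two roots of \eqref{charpolref} have distinct modulus, cf.~\eqref{strict}. To render the constant $C$ effective, I would normalize the recurrence by its leading coefficient $A_n$, bound the perturbation of the coefficients from their limits by an explicitly summable sequence (using that $A_n-1/4$, $B_n-1/2$, $C_n-1/4$ are rational functions of $n$ decaying like $1/n$), and then track constants through Pituk's argument. The extra $1/\sqrt{n}$ factor reflects the refined asymptotic specific to hypergeometric recurrences of this type and is what Pituk's theorem, combined with the algebraic decay of the perturbation, delivers; the conclusion $\lim_{n\to\infty}(\log|X_n|)/n=\log\rho$ is an immediate corollary.
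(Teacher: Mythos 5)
Your sketches for parts (i)--(iii) are reasonable direct verifications, but the paper itself offers no independent argument here: the proof is a one-line citation to \cite{KP}, and the statement already embeds the relevant pointers, so there is nothing to compare.

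For part (iv) there is a genuine gap. You attribute the bound $|X_n|\leq C\rho^n/\sqrt{n}$ to Pituk's Theorem~\ref{Pituk} combined with the $O(1/n)$ decay of $A_n-\tfrac14$, $B_n-\tfrac12$, $C_n-\tfrac14$, proposing to ``track constants through Pituk's argument'' to obtain the extra $1/\sqrt{n}$. This cannot work as stated. Theorem~\ref{Pituk} (and Perron's Second theorem, Theorem~\ref{perron2}) only assert that $\limsup_{n\to\infty}\big(\log|x(n)|\big)/n = \log|\lambda_j|$ for some root $\lambda_j$ of the characteristic equation. A statement of that form is insensitive to any subexponential factor: it cannot distinguish $C\rho^n$ from $C\rho^n/\sqrt{n}$ or $C\rho^n n^{100}$, so no amount of effectivizing Pituk's proof would recover the $1/\sqrt{n}$ correction, which is strictly stronger information. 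That refinement is a separate result: it is \cite[Theorem 7.1 (32)]{KP}, obtained there from an explicit closed form for the solutions of the recurrence (a Birkhoff--Trjitzinsky-type asymptotic specific to this hypergeometric recurrence), and that is exactly what the proposition cites. In this paper Pituk's theorem plays a different and more limited role: it appears in Lemma~\ref{apply P-P} to decide which root each of the independent solutions $Q_n$ and $R_n$ tracks (i.e.\ to rule out the unwanted alternative \eqref{numero2}), not to establish the quantitative bound of part (iv).
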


\begin{proof}
The statement of this proposition directly follows from \cite{KP} as mentioned above.
\end{proof}

\begin{corollary}\label{det}
The sequences
$Q_n(z)$ and $R_n(z)$ form linearly independent solutions of the linear recurrence \eqref{recurrence} $($in other words,  
 independent solutions of the difference equation$)$.
\end{corollary}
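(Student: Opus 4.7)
My plan is to test linear independence of $Q_n(z)$ and $R_n(z)$ as solutions of the second-order recurrence \eqref{recurrence} via the Casoratian (discrete Wronskian) $W_n := Q_n R_{n+1} - Q_{n+1} R_n$, since a pair of solutions of a second-order linear recurrence is linearly independent if and only if this $2\times 2$ determinant is nonzero for some (equivalently every, up to the vanishing of the leading coefficient) $n$.

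The first step is to reduce this Casoratian to the one already known to be nonzero in Proposition \ref{pade f}(ii). Using the definition $R_n(z) = Q_n(z) f(z) - P_n(z)$, I would substitute $R_n = Q_n f - P_n$ and $R_{n+1} = Q_{n+1} f - P_{n+1}$ into $W_n$. The cross terms in $f$ cancel, leaving
\begin{equation*}
Q_n R_{n+1} - Q_{n+1} R_n = Q_{n+1} P_n - Q_n P_{n+1} = -\det\begin{pmatrix} Q_n & P_n \\ Q_{n+1} & P_{n+1}\end{pmatrix},
\end{equation*}
which is nonzero by Proposition \ref{pade f}(ii). This immediately gives that $(Q_n, R_n)$ are linearly independent sequences, hence span the two-dimensional solution space of \eqref{recurrence}.

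There is essentially no obstacle here; the content is purely formal, exploiting that $R_n$ and $P_n$ differ only by the $f(z)$-multiple of $Q_n$, which drops out of the Casoratian. The only point deserving a brief remark is that the second-order recurrence \eqref{recurrence} has nonzero leading coefficient $A_n$ for all $n \geq 1$ (indeed $A_n \to 1/4$), so the dimension of the solution space is genuinely two and the Casoratian criterion applies unambiguously.
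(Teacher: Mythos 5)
Your proof is correct and follows essentially the same route as the paper: both reduce the Casoratian of $(Q_n, R_n)$ to the nonvanishing determinant of Proposition~\ref{pade f}(ii) by substituting $R_n = Q_n f - P_n$ and letting the $f$-terms cancel. Your version is in fact slightly tighter --- it makes the cancellation explicit, correctly dispenses with the paper's (unnecessary) remark that ``$f(z)$ is not identically zero,'' which plays no role once the cross terms drop out, and notes that the leading coefficient $A_n$ never vanishes so the Casoratian criterion applies.
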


\begin{proof}
Since $R_n(z)= Q_n(z)f(z)-P_n(z)$ and $f(z)$ not identically zero,   by Proposition \ref{pade f} (ii),
\[\det \begin{pmatrix}Q_n(z) & R_n(z) \\ Q_{n+1}(z) & R_{n+1}(z)\end{pmatrix}\neq 0\text{~~follows~.}\]
\end{proof}

\section{The Poincar\'e-Perron theorem}\label{PP}
In this section, we present Perron's Second theorem and a theorem of
Pituk \cite[Theorem 8.47]{E} \cite[Theorem 2]{Pituk}  giving asymptotic estimates of the solutions
of a linear difference equation.  

Refer to \cite{E} for basic properties of
the difference equation with a historical survey. 
Perron's Second theorem is indeed already
valid in the case where the roots of the characteristic polynomial of the linear recurrence
are not necessarily distinct;  it is explicitly explained in  \cite[Theorem C (line 14, p.\,203)]{Pituk} 
\cite[Theorem\,8.11, p.\,344]{E}.
Moreover, it is proven as theorems \cite[Theorem 1, 2]{Pituk}  \cite[Theorem\,8.47, p.\,389]{E} 
for any non-trivial solution of the
difference equation \cite[line\,1--3, p.\,388]{E}, under the explicit hypothesis 
assuming that there should be at least two roots whose moduli are distinct
\cite[line 3, p. 208]{Pituk}.
Such generalization might enable us to give precise estimates in Diophantine problems.
of moduli partly equal. 

Although our linear recurrence equation of order $2$ has the characteristic polynomial 
\eqref{rho} with distinct roots,
we adapt the following generalization due to Pituk \cite[Theorem 2]{Pituk},
to ensure the effectivity of our $K$-approximation measures, namely, we employ Theorem \ref{Pituk}.

\begin{definition}\label{assum}
Let $\ell$ be a positive integer and let $s_{j}(n) ~(1\leq  j \leq \ell)$ be a function from $n\in \mathbb{Z}_{\geq 0}$
to $\C$. 
Consider the $\ell$-th order linear difference equation
in unknown complex-valued functions $x(n)$ on $n\in \mathbb{Z}_{\geq 0}$ satisfying
		\begin{equation}\label{deq2}
x(n+\ell)+s_{1}(n)x(n+\ell-1)+\cdots+s_{\ell}(n)x(n)=0.
		\end{equation}
		Suppose that the limit $t_j:=\lim_{n\to \infty} s_{j}(n)$ exists in $\C$ for each $1\leq j \leq \ell$, and
		the order $\ell$	is the minimal number where such recurrence \eqref{deq2} holds.
Let us write the characteristic equation
\begin{equation}\label{deq1}
		\lambda^\ell+t_1\lambda^{\ell-1}+\cdots+ t_\ell=0, 
				\end{equation}	
and denote by $\lambda_1, \ldots,  \lambda_\ell$ the roots of the equation \eqref{deq1} and by 
$\rho_1, \ldots,  \rho_\ell$ their modulus respectively.
\end{definition}

First we present the following theorem \cite[Theorem C, p.~203]{Pituk}.
\begin{theorem}[Perron's Second theorem]\label{perron2}
Consider the difference equation \eqref{deq2} defined in {Definition} \ref{assum}. Suppose $s_{\ell}(n)\neq 0$ for all $n$. 
Then \eqref{deq2} has $\ell$ linearly independent solutions $x_1(n), \ldots, x_{\ell}(n)$,
such that for each $1\leq j \leq \ell$,  
\begin{equation}\label{perron}
\displaystyle \limsup_{n\rightarrow \infty} \big(\log |x_j(n)|\big)/n=\log\vert\lambda_j\vert,
\end{equation}
where $\lambda_j$ is one root of
 \eqref{deq1}, with $1\leq j \leq \ell$. 
\end{theorem}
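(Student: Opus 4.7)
The plan is to reduce the scalar $\ell$-th order linear recurrence \eqref{deq2} to a first-order vector recurrence $\mathbf{y}(n+1) = A(n)\mathbf{y}(n)$ on $\C^\ell$ via the standard companion-matrix construction, and then to analyze it as a vanishing perturbation of the constant-coefficient system $\mathbf{y}(n+1) = A_\infty \mathbf{y}(n)$. Writing $\mathbf{y}(n) = (x(n), x(n+1), \ldots, x(n+\ell-1))^\top$, the matrix $A(n)$ is the companion matrix whose bottom row is $(-s_\ell(n), -s_{\ell-1}(n), \ldots, -s_1(n))$; the hypothesis $s_\ell(n) \neq 0$ makes $A(n)$ invertible, so the solution space of \eqref{deq2} is exactly $\ell$-dimensional. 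Under the minimality assumption on $\ell$ in Definition \ref{assum}, the limit $A_\infty = \lim A(n)$ has characteristic polynomial \eqref{deq1} of degree exactly $\ell$, with roots $\lambda_1, \ldots, \lambda_\ell$.

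The next step is to put $A_\infty$ in Jordan normal form $A_\infty = T J T^{-1}$ and substitute $\mathbf{z}(n) = T^{-1}\mathbf{y}(n)$ to obtain $\mathbf{z}(n+1) = (J + E(n))\mathbf{z}(n)$ with $\|E(n)\| \to 0$. The key technical lemma is a perturbation statement proved via a Banach fixed-point argument: for each Jordan block of $J$ with eigenvalue $\lambda$ and each $\varepsilon > 0$, the weighted sequence space $\mathcal{B}_\lambda^\varepsilon = \{(\mathbf{w}(n))_n : \sup_n (|\lambda|+\varepsilon)^{-n}\|\mathbf{w}(n)\| < \infty\}$ carries a contraction built from the Volterra-type summation operator associated with $E(n)$, once $n$ is large enough that $\|E(n)\|$ is dominated by a small multiple of $\varepsilon$. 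The resulting fixed point produces, for each block, a solution $\mathbf{z}^{(j)}(n)$ with nontrivial projection onto the block's leading direction and with $\|\mathbf{z}^{(j)}(n)\| = O\bigl((|\lambda|+\varepsilon)^n\bigr)$ for every $\varepsilon > 0$, giving the upper bound $\limsup (\log\|\mathbf{z}^{(j)}(n)\|)/n \leq \log|\lambda|$, hence the upper bound in \eqref{perron}.

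To assemble the $\ell$ linearly independent solutions and obtain the matching lower bound, I would group eigenvalues by distinct moduli $\rho_{(1)} > \cdots > \rho_{(m)}$ with multiplicities $d_1, \ldots, d_m$ summing to $\ell$. The block-solutions produced above are linearly independent because their asymptotic projections onto the Jordan decomposition form a triangular system. For the lower bound, an inductive dimension count on decreasing $k$ shows that the subspace of solutions satisfying $\limsup (\log|x(n)|)/n \leq \log\rho_{(k+1)}$ has dimension at most $d_{k+1} + \cdots + d_m$; this forces each of the $d_k$ solutions attached to the modulus level $\rho_{(k)}$ to realize equality in \eqref{perron}. The main technical obstacle will be the interaction between the nilpotent part of each Jordan block, which introduces polynomial factors $n^j \lambda^n$ in the unperturbed solutions, and the decay of $\|E(n)\|$, which may be slower than any negative power of $n$; absorbing the polynomial growth into the weight $(|\lambda|+\varepsilon)^n$ is precisely what restricts the conclusion to a $\limsup$-equality rather than the stronger $\lim$-equality, and making the whole construction quantitative in the sense of Pituk's effective refinement requires additional control on the rate $\|E(n)\| \to 0$.
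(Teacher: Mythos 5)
First, a remark on the comparison itself: the paper does not prove Theorem \ref{perron2}; it is quoted from Pituk [Theorem C] and Elaydi [Theorem 8.11] with no argument given, so your attempt can only be measured against the classical proofs. Your reduction to the companion system $\mathbf{y}(n+1)=A(n)\mathbf{y}(n)$, the Jordan decomposition of the limit matrix, and the Lyapunov--Perron fixed-point construction in the weighted spaces $\mathcal{B}_\lambda^\varepsilon$ are indeed the standard route, and they do deliver the \emph{upper} bound $\limsup_n \big(\log|x_j(n)|\big)/n\le\log|\lambda_j|$ together with $\ell$ linearly independent solutions (the independence needs the initial-data argument within each modulus class, but that is routine).

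The genuine gap is in the lower bound, that is, in the equality in \eqref{perron}. Your dimension count shows only that the solution attached to a modulus level $\rho_{(k)}$ does not lie in the subspace of solutions of growth rate $\le\rho_{(k+1)}$; combined with the upper bound this yields $\rho_{(k+1)}<\limsup_n|x(n)|^{1/n}\le\rho_{(k)}$, which does not exclude a limsup strictly between two consecutive moduli. Closing that interval is precisely the content of the dichotomy statement (every nonzero solution has $\limsup_n|x(n)|^{1/n}$ equal to some $|\lambda_j|$), i.e.\ of Theorem \ref{Pituk}, and your construction does not supply it: under the sole hypothesis $\|E(n)\|\to 0$ (no summability), the fixed point has the form $J^{n-n_0}\xi+(\text{correction})$ where the correction is controlled only by $O\bigl((|\lambda|+\varepsilon)^n\bigr)$, the same size as --- indeed potentially larger than --- the main term $\asymp|\lambda|^n$, so no lower bound on $\|\mathbf{z}^{(j)}(n)\|$ follows from the contraction estimate. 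The classical proofs (Perron, Coffman, Pituk) close this with a separate Gronwall-type dominance lemma: for an arbitrary nonzero solution one compares the norms of its projections onto the spectral subspaces attached to the distinct moduli, shows that one projection eventually dominates all the others, and that the dominating projection grows at exactly its spectral rate. Some argument of this kind must be added before equality in \eqref{perron} --- and hence the theorem --- is actually proved; as written, your proposal establishes only one inequality.
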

In {Theorem} \ref{perron2}, 
the roots $\lambda_1, \ldots,  \lambda_\ell$ are not necessarily supposed to be distinct  \cite[line\,14, p.\,344]{E}, 
however, by  the proof of Pituk \cite[line\,3, p.\,208]{Pituk},  there should be at least two distinct moduli 
among those of the roots of the characteristic polynomial. 

The following statement is a theorem of  Pituk \cite[Theorem 2]{Pituk}, 
whose proof  is everywhere {\textit{effective}}, that we apply in this article to deduce effective $K$-approximation measures.

\begin{theorem}[Pituk]\label{Pituk}
Consider the difference equation \eqref{deq2} defined in {Definition} \ref{assum}.
Then for any solution $x(n)$ of  \eqref{deq2}, we have
either $x(n)=0$ for all large $n$, or the following property holds. Put 
\begin{equation}\label{pm}
\log\varrho:=\limsup_{n\rightarrow \infty} \big(\log {|x(n)|\big)/n}.
\end{equation}
Then  $\log\varrho$ is equal to $\log\vert\lambda_j\vert$ where $\lambda_j$ is one root of
 \eqref{deq1}. 
\end{theorem}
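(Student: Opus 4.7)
The plan is to recast the scalar equation \eqref{deq2} as a first-order vector recurrence and then establish a dichotomy that rules out every would-be growth rate lying outside the spectrum of the limiting matrix. Setting $X(n)=\bigl(x(n),x(n+1),\ldots,x(n+\ell-1)\bigr)^{T}\in\C^{\ell}$, equation \eqref{deq2} becomes $X(n+1)=A(n)\,X(n)$, where $A(n)$ is the companion-type matrix whose last row is $(-s_{\ell}(n),\ldots,-s_{1}(n))$; the hypothesis $s_{j}(n)\to t_{j}$ gives $A(n)\to A$, the companion matrix of \eqref{deq1}, whose eigenvalues are precisely $\lambda_{1},\ldots,\lambda_{\ell}$. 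Since the components of $X(n)$ are consecutive shifts of $x$, the quantities $\|X(n)\|$ and $|x(n)|$ share the same exponential growth rate, so it suffices to show that $\limsup_{n\to\infty}(\log\|X(n)\|)/n$ equals $\log|\lambda_{j}|$ for some $j$, unless $X$ is eventually zero.

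The core ingredient I would establish is the following dichotomy: for every $\mu>0$ not lying in $\{|\lambda_{1}|,\ldots,|\lambda_{\ell}|\}$, no non-trivial solution satisfies $\limsup(\log\|X(n)\|)/n=\log\mu$. Fixing a Jordan decomposition $A=PJP^{-1}$ and splitting $J=\mathrm{diag}(J^{s},J^{u})$ according to whether the eigenvalue modulus is below or above $\mu$, the change of variable $Y=P^{-1}X$ transforms the system into $Y(n+1)=(J+F(n))Y(n)$ with $F(n)\to 0$. A contraction-mapping argument in a weighted Banach space of sequences, along the lines of the discrete Perron-Coffman method (amounting to solving a matrix Riccati difference equation), then produces an asymptotic invariant decomposition $Y(n)=Y^{s}(n)+Y^{u}(n)$ with $\|Y^{s}(n)\|=O\!\bigl((\mu-\varepsilon)^{n}\bigr)$ and, whenever $Y^{u}\not\equiv 0$, $\|Y^{u}(n)\|\geq c\,(\mu+\varepsilon)^{n}$ for some small $\varepsilon>0$ and constant $c>0$. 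In both cases the growth rate strictly avoids $\log\mu$.

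Combined with the elementary upper bound $\limsup(\log\|X(n)\|)/n\leq\log\max_{j}|\lambda_{j}|$ (obtained by choosing a norm on $\C^{\ell}$ adapted to the spectral radius of $A$ so that $\|A(n)\|$ eventually lies below $\max_{j}|\lambda_{j}|+\varepsilon$), the dichotomy forces $\log\varrho$ to belong to $\{\log|\lambda_{1}|,\ldots,\log|\lambda_{\ell}|\}$ whenever $X$ is not eventually zero, which is the desired conclusion. The minimality hypothesis on $\ell$ in Definition~\ref{assum} is used here to exclude the degenerate case where the solution lives in a proper invariant subspace on which the induced dynamics have fewer eigenvalues.

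The hard part will be the \emph{effective} control of the asymptotic splitting: the implicit constants in the $O$-symbol and in the lower bound $c\,(\mu+\varepsilon)^{n}$ depend on the spectral gap $\min_{j}\bigl|\mu-|\lambda_{j}|\bigr|$ and on the rate at which $A(n)\to A$, and it is precisely their effectivity that underpins the effectivity of the $K$-approximation measure in Theorem~\ref{main}. Threading the quantitative estimates through the fixed-point argument without any non-constructive compactness step is where most of the care is required; fortunately, in the order-two setting actually used in our Padé construction the fixed-point equation collapses to a scalar Riccati recursion, which should simplify the bookkeeping considerably.
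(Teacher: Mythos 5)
The paper does not actually prove Theorem~\ref{Pituk}: it quotes \cite[Theorem~2]{Pituk} as a black box and only remarks in the introduction that the authors ``checked the effectivity of all the proof.'' So there is no internal proof to compare your sketch against. Your plan---pass to the companion system $X(n+1)=A(n)X(n)$ with $A(n)\to A$, split the spectrum of $A$ at a candidate modulus $\mu$, and derive a stable/unstable dichotomy via a fixed-point argument in weighted sequence spaces---is the standard framework in which Pituk's theorem and its precursors (Perron, Coffman, Benzaid--Lutz) live, so the route is legitimate in outline.

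What you have, though, is an outline with the central estimate deferred. The asymptotic invariant splitting $Y=Y^{s}+Y^{u}$ together with $\|Y^{s}(n)\|=O\bigl((\mu-\varepsilon)^{n}\bigr)$ and $\|Y^{u}(n)\|\geq c(\mu+\varepsilon)^{n}$ whenever $Y^{u}\not\equiv 0$ is \emph{asserted} (``a contraction-mapping argument \ldots along the lines of the discrete Perron--Coffman method'') rather than established, and since the projections onto the two pieces must themselves evolve with $n$ under the perturbation $F(n)\to 0$, this is precisely where the real work of the theorem sits; it cannot be cited as a generic lemma. You also silently assume $A(n)$ is invertible: the statement's first branch, $x(n)=0$ for all large $n$, is there exactly because $s_{\ell}(n)$ may vanish for some $n$, and your reformulation never treats it; nor do you treat $t_{\ell}=0$, in which case one $\lambda_{j}$ is zero and a non-trivial solution may well have $\limsup_{n}|x(n)|^{1/n}=0$. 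Finally, your appeal to the minimality of $\ell$ in Definition~\ref{assum} to ``exclude the degenerate case where the solution lives in a proper invariant subspace'' does not do the job you want: minimality of the order of the equation says nothing about an individual solution satisfying a shorter recurrence, and it is not a hypothesis in Pituk's argument. To turn this into a proof you need to write out the weighted-space fixed-point estimate with explicit dependence on the spectral gap and on the rate $A(n)\to A$, derive the two-sided growth bounds from it, and then handle the $s_{\ell}(n)=0$ and $\lambda_{j}=0$ degeneracies separately.
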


\medskip

Consequently, {\textit{every}} linearly independent solution corresponds to one root of  \eqref{deq1}, and any 
non-zero solution of \eqref{deq2} has the {asymptotic} behavior above. 
Note that this limit sup could not be replaced by the limit in Theorem \ref{Pituk} \cite[p.\,205]{Pituk}, but in our case, the limits exist.

\begin{remark}\label{rem}
As is mentioned in Section~\ref{Intro}, Pituk gave an effective proof 
for Theorem \ref{Pituk}, supposing that at least two moduli are distinct,
among those of the roots of \eqref{deq1}.
We have supposed 
$\rho_1(\sigma_k(\beta)) < \rho_2(\sigma_k(\beta))~~(2\leq k\leq d)$ and $|\beta|>1$ (which gives  $\rho_1(\beta) < \rho_2(\beta)$\,) 
in {Theorem} \ref{main}, hence we are in the case
(our Proposition \ref{pade f} (iv) holds when the roots of the
characteristic equation are distinct in modulus and the limits exist).

\end{remark}

\section{Proof of main theorem}\label{proof}
Now let us consider the characteristic polynomial \eqref{rho}, defined by
$P_k(X) = X^2-2(2\sigma_k(\beta)-1)X+1$ for each conjugate $\sigma_k~(1\leq k \leq d)$.
We rely on the fact that \eqref{pm} in Theorem \ref{Pituk} 
yields {asymptotic} estimates. Since $\sigma_k~(1\leq k \leq d)$ is an isomorphism keeping stable the rationals, 
Proposition \ref{pade f} ensures that $P_{n}(\sigma_k(\beta))$, $Q_{n}(\sigma_k(\beta))$ for each $1\leq k \leq d$, then $R_{n}(\beta)$ satisfy
the same linear recurrence equation. 
\begin{lemma} \label{apply P-P}
    Let $\beta$ be an algebraic number
    with $|\beta|>1$. Suppose $\rho_1(\sigma_k(\beta)) <\rho_2(\sigma_k(\beta))$ for all ~$2\leq k \leq d$. 
Then, 
as $n$ tends to the infinity, we obtain
    \begin{equation}\label{eq: estimates for P_n and Q_n}
      \lim_{n\to \infty}  |Q_{n}(\sigma_k(\beta))^{1/n}|= \rho_2(\sigma_k(\beta))
\AND
     \lim_{n\to \infty} |R_n(\beta)^{1/n}|=\rho_1(\beta).
         \end{equation}
\end{lemma}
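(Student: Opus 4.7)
The strategy is to apply Proposition~\ref{pade f}(iv) --- a consequence of Pituk's Theorem~\ref{Pituk} --- to each of the sequences $\{Q_n(\sigma_k(\beta))\}_{n\geq 0}$ and $\{R_n(\beta)\}_{n\geq 0}$, and then to identify which of the two moduli $\rho_1,\rho_2$ governs the limit. Since $\sigma_k$ is a $\Q$-algebra embedding and the coefficients $A_n,B_n,C_n$ of the recurrence \eqref{recurrence} are rational, specialising the polynomial identities of Proposition~\ref{pade f}(iii) at $z=\sigma_k(\beta)$ shows that $Q_n(\sigma_k(\beta))$ satisfies \eqref{recurrence} with $z$ replaced by $\sigma_k(\beta)$, and likewise $R_n(\beta)$ satisfies it with $z=\beta$. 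Thanks to $|\beta|>1$ (which forces $\rho_1(\beta)<1<\rho_2(\beta)$ via \eqref{strict}) and the standing hypothesis $\rho_1(\sigma_k(\beta))<\rho_2(\sigma_k(\beta))$ for $2\leq k\leq d$, the characteristic polynomial $P_k$ has distinct-modulus roots in every case, so Proposition~\ref{pade f}(iv) guarantees that each limit $\lim_{n\to\infty}(\log|X_n|)/n$ exists and equals $\log\rho$ for one of $\rho_1(\sigma_k(\beta)),\rho_2(\sigma_k(\beta))$ (the eventually-zero case is excluded by noting that $Q_n(z)$ is a polynomial of degree exactly $n$ and that $R_n(\beta)$ is the tail of a convergent nonzero series).

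For $R_n(\beta)$, the plan is to use the explicit series representation from Proposition~\ref{pade f}(i). Bounding the hypergeometric factor $\binom{\ell}{n}\prod_{i=1}^\ell(i+x)\cdot n!/(x+2)_{n+\ell}$ crudely and summing the geometric tail in $1/|\beta|$ yields $\limsup_{n\to\infty}(\log|R_n(\beta)|)/n\leq -\log|\beta|<0$. Since $\log\rho_2(\beta)>0$, the only admissible value for the limit is $\log\rho_1(\beta)$, which gives the second equality of the lemma.

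For $Q_n(\sigma_k(\beta))$, the idea is to show that this sequence lies outside the one-dimensional subspace of \emph{small} solutions (those with asymptotic rate $\rho_1$). When $k=1$ this is immediate from Corollary~\ref{det}: the Casoratian of $Q_n(z)$ and $R_n(z)$ is nonzero and satisfies the first-order recurrence $W_n=(C_n/A_n)W_{n-1}$ with $A_n,C_n\neq 0$, so its specialisation at $\beta$ remains nonzero; hence $Q_n(\beta)$ and $R_n(\beta)$ are linearly independent solutions, and since $R_n(\beta)$ already spans the small-solution line by the previous paragraph, $Q_n(\beta)$ must correspond to $\rho_2(\beta)$. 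The main obstacle is the case $k\geq 2$, where the surrogate $R_n(\sigma_k(\beta))$ is not directly available because the Lerch-type series defining $f(\sigma_k(\beta))$ need not converge when $|\sigma_k(\beta)|\leq 1$. To handle this, the plan is to combine the linear independence of $Q_n$ and $P_n$ from Proposition~\ref{pade f}(ii) (which forces at least one of the two to be a large solution at $\sigma_k(\beta)$) with the explicit leading coefficient $(n+1+x)_n/n!\sim 4^n/\sqrt{n}$ of $Q_n(z)$, matching the asymptotic $\rho_2(z)\sim 4|z|$ at large $|z|$, and to invoke the sharper hypergeometric asymptotic for $Q_n$ established in~\cite{KP} in order to extend the identification uniformly over the whole distinct-modulus regime, in particular when $|\sigma_k(\beta)|$ is close to $1$.
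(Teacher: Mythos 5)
Your proposal follows essentially the same route as the paper: invoke Theorem~\ref{Pituk} (Pituk/Perron) to pin the growth rate of every non-trivial solution of the specialised recurrence to one of the two root moduli, then decide which root each sequence corresponds to. Your treatment of $R_n(\beta)$ (direct series bound giving $\limsup_n|R_n(\beta)|^{1/n}\leq 1/|\beta|<1$, forcing the small root $\rho_1(\beta)$) and of $Q_n(\beta)$ (via the nonvanishing Casoratian of Corollary~\ref{det} and the first-order recurrence $W_n=(C_n/A_n)W_{n-1}$, so $Q_n(\beta)$ lies outside the small-solution line) is exactly the substance of the paper's argument at $k=1$. You also correctly isolate the genuine difficulty: for $2\leq k\leq d$ there is no analytic surrogate $R_n(\sigma_k(\beta))$ to compare against, since the series defining $f$ need not converge at $\sigma_k(\beta)$. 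The paper resolves this by citing \cite[Lemma 4.3]{KP} as a black box; your plan likewise ultimately defers to the $Q_n$-asymptotics established in \cite{KP}, so both arguments bottom out in the same external ingredient.

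The one intermediate step in your sketch that would not survive on its own is the leading-coefficient heuristic: matching $(n+1+x)_n/n!\sim 4^n/\sqrt{n}$ with $\rho_2(z)\sim 4|z|$ controls $|Q_n(z)|$ only in the regime $|z|\to\infty$, and gives no information about whether $Q_n(\sigma_k(\beta))$ falls into the one-dimensional small-solution subspace when $|\sigma_k(\beta)|$ is moderate or $\leq 1$; likewise, linear independence of $Q_n$ and $P_n$ (Proposition~\ref{pade f}(ii)) only tells you that \emph{at most one} of them is small, not which one. You flag this yourself and point to \cite{KP} for the actual identification, which is precisely what the paper does. A small supplementary point: your exclusion of the eventually-zero alternative by ``$Q_n$ has degree exactly $n$'' is not quite the right reason; the clean argument is that $Q_0\equiv 1$ together with $C_n\neq 0$ lets one run the recurrence backwards, so $Q_n(\sigma_k(\beta))=0$ for all large $n$ would force $Q_0(\sigma_k(\beta))=0$, a contradiction.
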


\begin{proof} 
{Corollary} \ref{det} and {Theorem} \ref{Pituk} giving a generalization of {Theorem} \ref{perron2} imply, either
    \begin{align}\label{numero1}
      \limsup_{n\to \infty}  |Q_{n}(\sigma_k(\beta))^{1/n}|= \rho_2(\sigma_k(\beta))
\AND
     \limsup_{n\to \infty} |R_n(\beta)^{1/n}|=\rho_1(\beta)
    \end{align}
    or
    \begin{align}\label{numero2}
      \limsup_{n\to \infty}  |Q_{n}(\sigma_k(\beta))^{1/n}|= \rho_1(\sigma_k(\beta))
\AND
     \limsup_{n\to \infty} |R_n(\beta)^{1/n}|=\rho_2(\beta),
    \end{align}
  since $Q_n(z)$ and $R_{n}(z)$ form linearly independent solutions of \eqref{charpolref}. 

Now, taking into account of  \cite[Lemma 4.3]{KP} together with 
$\lim_{n\to \infty}|R_n(\beta)|^{1/n}=0$ from Proposition \ref{pade f} (i) 
and $\rho_1(\sigma_k(\beta)) <1< \rho_2(\sigma_k(\beta))$ by \eqref{rhorho}, 
Theorem \ref{Pituk} guarantees that  \eqref{numero1} holds instead of \eqref{numero2}, since
each linearly independent solution of the linear recurrence \eqref{recurrence} corresponds to one root of
the characteristic equation. 
Precisely, we have (noting that $o(1)$ can be explicit)
\begin{equation}\label{numero3}
\Big(\log |Q_{n}(\sigma_k(\beta))|\Big)/n= (1+o(1))\Big(\log\rho_2(\sigma_k(\beta))\Big)~~(n\to \infty),
  \end{equation}
  \begin{equation*}
\Big(\log|R_n(\beta)|\Big)/n= (1+o(1))\Big(\log\rho_1(\beta)\Big)~~(n\to \infty).
    \end{equation*}
  \end{proof}
\begin{corollary}\label{corcor}
Let $\beta\in K$ and put $\Theta_n={P_{n}(\beta)}/{Q_{n}(\beta)}.$ 
Then we have 
\\
$({\rm{i}})$\,~$\lim_{n\to \infty}h\big(\Theta_n\big)/n\leq\Big(\log Q\Big)/d$,
\\
$({\rm{ii}})$~$\lim_{n\to \infty}\Big(\log|\xi-\Theta_n|\Big)/n=-\log E$ $($in particular $\xi\neq \Theta_n$ for infinitely many $n$$)$.

\end{corollary}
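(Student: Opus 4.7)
The plan is to prove the two parts in sequence, using (ii) (the approximation rate of $\xi=f(\beta)$ by $\Theta_n$) as the main driver, and (i) (the controlled growth of the Weil height) deduced from the explicit denominator $\kappa_n$ together with archimedean estimates at every conjugate embedding.

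For (ii), I would start from the Pad\'e remainder identity $R_n(z) = Q_n(z) f(z) - P_n(z)$ of Proposition \ref{pade f}, specialize to $z = \beta$, and rearrange:
\[
f(\beta) - \Theta_n \;=\; \frac{R_n(\beta)}{Q_n(\beta)}.
\]
Taking the $(1/n)$-logarithm of absolute values and applying Lemma \ref{apply P-P} (which furnishes $|R_n(\beta)|^{1/n}\to \rho_1(\beta)$ and $|Q_n(\beta)|^{1/n}\to \rho_2(\beta)$) yields
\[
\frac{1}{n}\log|f(\beta)-\Theta_n|\;\longrightarrow\; \log\rho_1(\beta) - \log\rho_2(\beta)\;=\; -\log E,
\]
using $E=\rho_2(\beta)/\rho_1(\beta)$. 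Since the limit is a finite negative number, $\Theta_n\neq \xi$ for all large $n$.

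For (i), by construction of $\kappa_n$ in Notation \ref{corrected}, both $\kappa_n P_n(\beta)$ and $\kappa_n Q_n(\beta)$ lie in $\GrO_K$. I would use the projective form of the Weil height,
\[
\hh(\Theta_n) \;=\; \frac{1}{d}\sum_{v\in M_K} d_v\,\log\max\bigl(|\kappa_n P_n(\beta)|_v,\,|\kappa_n Q_n(\beta)|_v\bigr),
\]
and split the sum into finite and infinite places. At every finite $v$ both entries are $v$-adic integers, so the contribution is nonpositive. At the archimedean places, the sum unfolds over the $d$ embeddings $\sigma_k$ into $d\log\kappa_n + \sum_{k=1}^{d}\log\max(|P_n(\sigma_k\beta)|,|Q_n(\sigma_k\beta)|)$. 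Lemma \ref{apply P-P} gives $|Q_n(\sigma_k\beta)|^{1/n}\to\rho_2(\sigma_k(\beta))$ at each $k$, while Pituk's theorem (Theorem \ref{Pituk}) applied to the recurrence \eqref{recurrence} at $z=\sigma_k(\beta)$ bounds $|P_n(\sigma_k\beta)|^{1/n}$ by $\rho_2(\sigma_k(\beta))$ asymptotically. Combined with the classical prime-number-type asymptotics $\log d_n(x)/n\to \rden(x)/\varphi(\rden(x))$ and $\log \nu_n(x)/n\to \log\nu(x)$, which together deliver $\log\kappa_n/n\to\log\Delta$, assembling all pieces produces the announced asymptotic upper bound on $\hh(\Theta_n)/n$ in terms of $(\log Q)/d$.

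The main obstacle will be the archimedean analysis at the conjugate embeddings $\sigma_k(\beta)$ for $k\geq 2$, where the defining power series of $f$ need not converge since $|\sigma_k(\beta)|$ may fail to exceed $1$. The recurrence \eqref{recurrence} nonetheless remains pointwise valid, so Pituk's theorem applies to the scalar sequences $\{P_n(\sigma_k\beta)\}$ and $\{Q_n(\sigma_k\beta)\}$; however, to guarantee that $|Q_n(\sigma_k(\beta))|^{1/n}$ attains the larger characteristic root $\rho_2(\sigma_k(\beta))$ rather than the smaller $\rho_1(\sigma_k(\beta))$, one must exploit the strict separation $\rho_1(\sigma_k(\beta))<\rho_2(\sigma_k(\beta))$ from the hypothesis together with a linear independence argument analogous to Corollary \ref{det} at each conjugate.
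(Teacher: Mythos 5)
Your argument for part (ii) is correct and follows the paper's route exactly: write $\xi-\Theta_n = R_n(\beta)/Q_n(\beta)$, invoke Lemma~\ref{apply P-P}, and read off $\log(\rho_1(\beta)/\rho_2(\beta))=-\log E$. The overall strategy for (i) (projective Weil height, finite places killed by the common denominator $\kappa_n$, archimedean places handled by Lemma~\ref{apply P-P} together with Pituk applied to $P_n$ at each conjugate) is also the same as the paper's.

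However, there is a concrete gap in the assembly step of (i). With your decomposition the archimedean part contributes
\[
\frac{1}{d}\sum_{k=1}^{d}\log\max\bigl(|\kappa_n P_n(\sigma_k\beta)|,\,|\kappa_n Q_n(\sigma_k\beta)|\bigr)
=\log\kappa_n+\frac{1}{d}\sum_{k=1}^{d}\log\max\bigl(|P_n(\sigma_k\beta)|,|Q_n(\sigma_k\beta)|\bigr),
\]
because $\kappa_n\in\Q_{>0}$ appears once in each of the $d$ embeddings. Dividing by $n$ and using $\log\kappa_n/n\to\log\Delta$, this yields the limit
\[
\log\Delta+\frac{1}{d}\sum_{k=1}^{d}\log\rho_2(\sigma_k(\beta)),
\]
whereas the announced bound is
\[
\frac{\log Q}{d}=\frac{1}{d}\log\Delta+\frac{1}{d}\sum_{k=1}^{d}\log\rho_2(\sigma_k(\beta)).
\]
These differ by $(1-1/d)\log\Delta$, which is nonzero as soon as $d\ge 2$ and $\Delta>1$. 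Your claim that ``assembling all pieces produces the announced asymptotic upper bound $\ldots$ in terms of $(\log Q)/d$'' is therefore not justified by the pieces you listed. The paper's own proof is terse at exactly this point: it asserts $\hh_f(\Theta_n)/n\le(1/d)\log\Delta$ and attributes the denominator control to \cite[Lemma~4.1, Lemma~4.2(ii)]{KP}, which you have not used. Either you need to import and verify the sharper denominator estimate from those KP lemmas (the naive bound $\hh_f(\Theta_n)/n\le\log\Delta$ is what your argument gives), or the discrepancy must be reconciled some other way; you cannot simply pass from $\log\Delta$ to $(1/d)\log\Delta$. I would also note, separately, that your appeal to ``Lemma~\ref{apply P-P} gives $|Q_n(\sigma_k\beta)|^{1/n}\to\rho_2(\sigma_k(\beta))$'' for $k\ge 2$ is fine, but be aware that for those conjugates one cannot compare $Q_n$ against the remainder $R_n$ directly (the series for $f$ need not converge at $\sigma_k(\beta)$), so the selection of $\rho_2$ over $\rho_1$ there rests on the argument inside Lemma~\ref{apply P-P}, which you should not treat as automatic.
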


\begin{proof}
By \eqref{eq: estimates for P_n and Q_n}, we obtain $\lim_{n\to \infty}\mathrm{h_{\infty}}(\Theta_n)/n=
\big(1/d\big)\sum_{1\leq k \leq d}\log \rho_2(\sigma_k(\beta))$, noting that 
Proposition \ref{pade f} (iv) impies also for $P_n$ the estimate
$\Big(\log |P_{n}(\sigma_k(\beta))|\Big)/n= (1+o(1))\Big(\log\rho_2(\sigma_k(\beta))\Big)$,
since $P_{n}(\sigma_k(\beta))$  satisfies the same recurrence as $Q_{n}(\sigma_k(\beta))$.
Combining  \cite[Lemma 4.1, Lemma 4.2(ii)]{KP}  with the definition of
 $\Delta,~Q$ in {Notation} \ref{DQE}, we obtain that  $\Delta^n$ is the denominator of 
 $P_{n}(\sigma_k(\beta))$ and $Q_{n}(\sigma_k(\beta))$. Then we have ${h}_{f}\big(\Theta_n\big)/n\leq
 \big(1/d\big)\log \Delta$, hence
 $({\rm{i}})$ follows.  
We are going to prove  $({\rm{ii}})$. 
By $R_n(\beta)=Q_n(\beta)f(\beta)-P_n(\beta)$,  we obtain for $\xi=f(\beta)$: 
$$\limsup_{n\to \infty}\Big(\log|\xi-\Theta_n|\Big)/n=\limsup_{n\to \infty}\Big(\log|Q_{n}(\beta)\xi-P_{n}(\beta)|\Big)/n
 -\liminf_{n\to \infty}\Big(\log|Q_{n}(\beta)|\Big)/n.$$
Noting that the both limits do exist on the right-hand side by  {Lemma} \ref{apply P-P} with \eqref{numero3} and $k=1$:
$$\lim_{n\to \infty}\Big(\log|Q_{n}(\beta)\xi-P_{n}(\beta)|\Big)/n=\log\rho_1(\beta)\AND
\lim_{n\to \infty}\Big(\log|Q_{n}(\beta)|\Big)/n=\log\rho_2(\beta), \text{~we conclude}$$
$$\lim_{n\to \infty}\Big(\log|\xi-\Theta_n|\Big)/n=\log\Big(\rho_1(\beta)/\rho_2(\beta)\Big).$$
Consequently,  by definition of 
$E = \rho_2(\beta)/\rho_1(\beta)=\big(\rho_2(\beta)\big)^2$, we obtain  $({\rm{ii}})$.
\end{proof}

The proof of Theorem \ref{main} is carried out as follows. We employ the
Pad\'e approximants $(P_{n}(\beta))_{n\ge 0}$, $(Q_{n}(\beta))_{n\ge 0}$ for $\xi=f(\beta)$.
We choose  $\rho=\log E,  ~c=\Big(\log Q\Big)/d$,  then by 
{Corollary} \ref{corcor} together with {Lemma} \ref{marco}, we obtain the statement.

\vspace{-2mm}
\section{Examples}\label{example}

We list a few of new examples of $K$-approximation measures satisfying the assumption \eqref{important}.
Denote by $\alpha$, a root with  $\Re(\alpha )>0$, $\Im(\alpha )>0$ of $2X^4-3X^2+2$ considered in \cite{AD}. The
number  $3+2\alpha$ below is an algebraic integer. Denote by $\omega\neq 1$ the cubic root of the unity with $\Im(\omega)>0$.

\noindent
\begin{tabular}{>{\centering}p{7em}|>{\centering}p{8em}||>{\centering}p{3em}|>{\centering}p{5em}|p{7.7em}|p{8em}}
$\beta$ & $f(\beta )$ & $x$ & $K$ & $\mueff(f(\beta ),K)\leq~~~$ & results in \cite{A-R} or \cite{A-V}\\
\hline\hline
$2$ & $\log 2$ & $0$ & $\Q$ & 4.6221 &  4.6221~~\cite{A-R}\\
\end{tabular}\\
\begin{tabular}{>{\centering}p{7em}|>{\centering}p{8em}||>{\centering}p{3em}|>{\centering}p{5em}|p{7.7em}|p{8em}}
\hline
$2i$ & $\log \frac{4-2i}{5}$ & $0$ & $\Q (i)$ & 2.61631&  -  \\
$2i$ & & $1/3$ & $\Q (i)$ & 7.73819&  -  \\
$2i$ & & $1/5$ & $\Q (i)$ & 8.63437&  -  \\
\end{tabular}\\
\begin{tabular}{>{\centering}p{7em}|>{\centering}p{8em}||>{\centering}p{3em}|>{\centering}p{5em}|p{7.7em}|p{8em}}
\hline
$2^{24}$ & $\log \frac{2^{24}}{2^{24}-1}$ & $0$ & $\Q$ & 2.1175&  -  \\
$2^{24}$ & & $1/3$ & $\Q$ & 2.42328&  -  \\
$2^{24}$ & & $1/5$ & $\Q$ & 2.44198&  -  \\
\hline
$5^{10}+5^{10}i$ & $\log \frac{2\cdot 5^{20}-5^{10}-5^{10}i}{2\cdot 5^{20}-2\cdot 5^{10}+1}$ & $0$ & $\Q (i)$ & 2.06402\\
$5^{10}+5^{10}i$ & & $1/3$ & $\Q (i)$ & 2.20127&  -  \\
$5^{10}+5^{10}i$ & & $1/5$ & $\Q (i)$ & 2.20906&  -  \\
\hline
\end{tabular}\\
\begin{tabular}{>{\centering}p{7em}|>{\centering}p{8em}||>{\centering}p{3em}|>{\centering}p{5em}|p{7.7em}|p{8em}}
$1000+\sqrt[3]{5}\omega$ & $\log \frac{1000+\sqrt[3]{5}\omega }{999+\sqrt[3]{5}\omega }$ & $0$ & $\Q (\sqrt[3]{5}\omega )$ & 6.82514&  -  \\
$1000+\sqrt[3]{5}\omega$ & & $1/3$ & $\Q (\sqrt[3]{5}\omega )$ & 9.67602&  -  \\
$1000+\sqrt[3]{5}\omega$ & & $1/5$ & $\Q (\sqrt[3]{5}\omega )$ & 9.89516&  -  \\
\end{tabular}\\
\begin{tabular}{>{\centering}p{7em}|>{\centering}p{8em}||>{\centering}p{3em}|>{\centering}p{5em}|p{7.7em}|p{8em}}
\hline
$3+2\alpha $ & $\log \frac{3+2\alpha }{2+2\alpha }$& $0$ & $\Q (\alpha )$ & 11.2027&  -  \\
\end{tabular}\\
\begin{tabular}{>{\centering}p{7em}|>{\centering}p{8em}||>{\centering}p{3em}|>{\centering}p{5em}|p{7.7em}|p{8em}}
\hline
$50+50\sqrt[4]{3}i$ & $\log \frac{50+50\sqrt[4]{3}i}{49+50\sqrt[4]{3}i}$ & $0$ & $\Q (\sqrt[4]{3}i)$ & 163.837&  -  \\
\end{tabular}

\noindent
\begin{tabular}{>{\centering}p{7em}|>{\centering}p{8em}||>{\centering}p{3em}|>{\centering}p{5em}|p{7.7em}|p{8em}}
\hline
$-6-5\sqrt{2}$ & $\log (8-5\sqrt{2})$ & $0$ & $\Q (\sqrt{2})$ & 6.47612 & 10.8912~~\cite{A-V} \\
$-6-5\sqrt{2}$ & & $1/3$ & $\Q (\sqrt{2})$ & 50.0916 & -\\
$-6-5\sqrt{2}$ & & $1/5$ & $\Q (\sqrt{2})$ & 77.9114 & -\\
\hline
$18+12\sqrt{2}$ & $\log (18-12\sqrt{2})$ &  $0$ & $\Q (\sqrt{2})$ & 5.49683 & 7.6234~~\cite{A-V}\\
$18+12\sqrt{2}$ & &  $1/3$ & $\Q (\sqrt{2})$ & 13.7134 & -\\
$18+12\sqrt{2}$ & &  $1/5$ & $\Q (\sqrt{2})$ & 14.8937 & -\\
\end{tabular}\\
\begin{tabular}{>{\centering}p{7em}|>{\centering}p{8em}||>{\centering}p{3em}|>{\centering}p{5em}|p{7.7em}|p{8em}}
\hline
$3364+2378\sqrt{2}$ & $\log (3364-2378\sqrt{2})$ &  $0$ & $\Q (\sqrt{2})$ & 4.44656 & 4.9905~~\cite{A-V}\\
$3364+2378\sqrt{2}$ & &  $1/3$ & $\Q (\sqrt{2})$ & 5.80557 & -\\
$3364+2378\sqrt{2}$ & &  $1/5$ & $\Q (\sqrt{2})$ & 5.9012 & -\\
\end{tabular}\\
\begin{tabular}{>{\centering}p{7em}|>{\centering}p{8em}||>{\centering}p{3em}|>{\centering}p{5em}|p{7.7em}|p{8em}}
\hline
$\dfrac{\sqrt[3]{7/6}}{\sqrt[3]{7/6}-1}$ & $\dfrac{1}{3}\log (7/6)$ &  $0$ & $\Q \left( \sqrt[3]{7/6}\right)$ & 11.5787 & 43.9427~~\cite{A-V}\\
$\dfrac{\sqrt[3]{7/6}}{\sqrt[3]{7/6}-1}$ & &  $1/3$ & $\Q \left( \sqrt[3]{7/6}\right)$ & 240.384 & -\\
\hline
$\dfrac{\sqrt[3]{2}}{\sqrt[3]{2}-1}$ & $\dfrac{1}{3}\log 2$ &  $0$ & $\Q \left( \sqrt[3]{2}\right)$ & 22.4389 & -\\
\hline
\end{tabular}\\


\vspace{3mm}

\noindent
\textbf{Acknowledgement}
\\
The authors are indebted to the anonymous referee for her/his helpful comments
and suggestions on an earlier version of the manuscript. 
The first author is supported by JSPS KAKENHI Grant no.~21K03171 and
also partly supported by the Research Institute for Mathematical Sciences, an international joint usage
and research center located in Kyoto University.
\vspace{-3mm}

\bibliography{}

\

\begin{minipage}[t]{0.39\textwidth}
Noriko Hirata-Kohno, \\
Department of Mathematics,\\
College of Science and Technology,\\
Nihon University, Kanda, Chiyoda, \\
Tokyo, 101-8308, Japan\\
hiratakohno.noriko@nihon-u.ac.jp
\end{minipage}
\begin{minipage}[t]{0.3\textwidth}
Ryuji Muroi,\\
Chiba Nihon University\\
First Senior and Junior \\High School
\end{minipage}
\begin{minipage}[t]{0.3\textwidth}
Yusuke Washio,\\Mathematics Major, \\
Graduate School \\of Science and Technology,\\
Nihon University
\end{minipage}

\end{document}